\newtheorem{thm}{Theorem}
\newtheorem{cor}{Corollary}[section]
\newtheorem{prop}{Proposition}[section]
\newtheorem{prob}[thm]{Question}
\newtheorem{conj}[thm]{Conjecture}
\theoremstyle{remark}
\newtheorem{rmk}{Remark}[section]
\theoremstyle{definition}
\numberwithin{equation}{section}
\def\p{\partial}
\def\l{\lambda}
\def\L{\Lambda}
\def\i{\sqrt{-1}}
\def\cC{{\mathcal C}}
\def\cF{{\mathcal F}}
\def\cH{{\mathcal H}}
\def\cK{{\mathcal K}}
\begin{document}

\title[On the convergence of the Calabi flow]{On the convergence of the Calabi flow}

\author{Weiyong He}

\address{Department of Mathematics, University of Oregon, Eugene, Oregon, 97403}
\email{whe@uoregon.edu}

\begin{abstract}Let $(M, [\omega_0], J)$ be a compact Kahler manifold without holomorphic vector field. Suppose $\omega_0$ is (the unique) constant scalar curvature metric. We show that the Calabi flow with any smooth initial metric converges to the constant scalar curvature metric $\omega_0$ with the assumption that Ricci curvature stays uniformly bounded. 
\end{abstract}

\maketitle

\section{Introduction}

Let $(M, [\omega_0], J)$ be a compact Kahler  manifold. 
The space of Kahler potentials is given by
\[
\cH=\{\phi\in C^\infty: \omega=\omega_0+\i \p\bar \p \phi>0\}. 
\]
The Calabi flow is defined by E. Calabi  in his seminal paper \cite{Calabi82} as follows
\[
\frac{\p \phi}{\p t}=R_\phi-\underline{R},
\]
where $R_\phi$ denotes the scalar curvature of $\omega_\phi$ and $\underline{R}$ is the average of scalar curvature, a topological constant depending only on $(M, [\omega_0], J)$.

It is a natural equation to seek a canonical representative in a fixed Kahler class (called extremal metric in \cite{Calabi82}), which includes the Kahler metrics with constant scalar curvature as a special case. 
The Calabi flow is the gradient flow of Mabuchi's $K$-energy $K$ and is also a reduced gradient flow for the Calabi energy. One of the most challenging problem is whether the Calabi flow exists for all time or not (say with any smooth initial Kahler metric). X.X. Chen has made an ambitious conjecture as follows,

\begin{conj}[Chen]\label{C-1}The Calabi flow exists for all time for any initial potential in $\cH$.\end{conj}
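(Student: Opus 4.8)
The plan is to attack Conjecture \ref{C-1} by ruling out finite-time singularities. Short-time existence for the Calabi flow follows from standard fourth-order parabolic theory, so it suffices to show that a solution $\phi(t)$ on a maximal interval $[0,T)$ with $T<\infty$ must in fact extend past $T$; for this I would establish uniform control of all the relevant geometry on $[0,T)$, which contradicts maximality. Concretely the target estimates are a two-sided bound $C^{-1}\omega_0\le\omega_\phi\le C\omega_0$ on $[0,T)$ together with uniform bounds on the curvature of $\omega_\phi$ and all of its covariant derivatives; parabolic smoothing for the linearized fourth-order operator (a Calabi-flow analogue of Shi's estimates) then upgrades these to uniform $C^\infty$ control and produces the extension.

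The steps, in order, would be: (i) record the monotonicity along the flow — Mabuchi's $K$-energy is non-increasing and the Calabi energy $\int_M (R_\phi-\underline R)^2\,\omega_\phi^n$ is non-increasing — which yields $\int_0^T\!\int_M (R_\phi-\underline R)^2\,\omega_\phi^n\,dt<\infty$ and, after integrating by parts in the evolution equation of $R_\phi$, $L^2$-in-time control of $\partial_t\phi$ and of the derivatives of $R_\phi$; (ii) use the fixed cohomology class to normalize the volume and then extract a non-collapsing and a diameter bound; (iii) the crucial step, bound the curvature of $\omega_\phi$ on $[0,T)$ from its parabolic evolution equation together with integral ($\varepsilon$-regularity and Moser iteration) arguments, the heuristic being that a finite-time singularity would force curvature to concentrate on sets of arbitrarily small volume, which the decay of the Calabi energy should preclude — and here it is in fact enough to control the Ricci curvature, since a uniform Ricci bound is already known to imply long-time existence, and is precisely the hypothesis under which the present paper proves convergence; (iv) close the argument with the smoothing estimates above and then run this paper's convergence argument.

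I expect step (iii) to be the main obstacle, and this is exactly why Conjecture \ref{C-1} is open for $\dim_{\C} M\ge 2$. The root difficulty is that the Calabi flow is fourth order, so there is no maximum principle to bound $R_\phi$ pointwise the way Cao's argument bounds the evolving metric under the Kahler--Ricci flow; one is forced to argue with integral norms and with the evolving Sobolev constant, and preventing curvature concentration in finite time \emph{without} already knowing that the metrics stay uniformly equivalent is the genuine analytic bottleneck. A secondary obstacle is a $C^0$ bound for the potential: unlike the complex Monge--Amp\`ere setting there is no elliptic equation for $\phi$ on which to run Yau's estimates, so such a bound must be squeezed out of the energy functionals, using here that the $K$-energy is proper because $\omega_0$ is the unique constant scalar curvature metric and $M$ carries no holomorphic vector field. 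As a consistency check, the whole program does go through when $\dim_{\C} M = 1$, where the Calabi flow reduces through the conformal factor to a single fourth-order scalar parabolic equation and global existence with exponential convergence is classical (Chru\'sciel, and then Chen and Struwe); the higher-dimensional obstruction is precisely the loss of that scalar reduction.

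An alternative route, which trades the analytic difficulty for a regularity question, is to work not on $\cH$ but on its metric completion in the Mabuchi $L^2$ distance $d_2$, which is a nonpositively curved geodesic space: there the Calabi flow is the metric gradient flow of the (convex, lower semicontinuous extension of the) $K$-energy, and minimizing-movement / implicit-Euler schemes produce a global-in-time weak solution for every initial datum; one would then have to show that this weak solution becomes smooth for $t>0$ and agrees with the classical flow, after which long-time existence in Conjecture \ref{C-1} is immediate. In either approach the present paper supplies the convergence half of the picture, so that the complete statement — global existence together with convergence to $\omega_0$ — reduces to the finite-time curvature, indeed Ricci, estimate of step (iii).
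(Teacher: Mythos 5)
The statement you were asked to prove is Conjecture \ref{C-1}, which the paper records as an open conjecture of X.X.~Chen; the paper offers no proof of it, and in fact its main theorem takes a strengthened form of the conclusion (long-time existence together with a uniform Ricci bound) as a standing \emph{assumption}. Your write-up is an accurate and well-informed survey of the state of the problem, but it is not a proof: the entire content of the conjecture is concentrated in your step (iii), the finite-time curvature (indeed Ricci) bound, and you explicitly concede that this step is the unresolved analytic bottleneck. Reducing ``the flow exists for all time'' to ``the Ricci curvature does not blow up in finite time'' is a genuine and known reduction (it is precisely the Chen--He extension criterion the paper cites), but it does not close the argument; nothing in your steps (i)--(ii) --- monotonicity of the $K$-energy and of the Calabi energy, volume normalization --- is strong enough to preclude curvature concentration in finite time, because the Calabi energy controls only an $L^2$ norm of the scalar curvature, which is scale-critical only in complex dimension one and degenerates under blow-up rescaling when $\dim_{\C} M\ge 2$. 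So the proposal reduces the conjecture to the hard step and stops there.

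Two smaller points. First, your secondary obstacle (a $C^0$ bound for $\phi$ via properness of the $K$-energy) invokes hypotheses --- uniqueness of the constant scalar curvature metric, absence of holomorphic vector fields --- that are not part of Conjecture \ref{C-1}, which is asserted for an arbitrary initial potential in $\cH$ in an arbitrary Kahler class, with no existence assumption whatsoever; any proof of the conjecture must therefore work even when the $K$-energy is unbounded below, so this tool is unavailable in general. Second, the minimizing-movement route you sketch (working in the metric completion of $\cH$, where the $K$-energy gradient flow exists globally as a weak solution) is consistent with the paper's Section~2 and with the cited work of Streets, but the regularity and consistency question --- that the weak solution is smooth for $t>0$ and coincides with the classical flow --- is itself open, so this alternative merely relocates the same gap. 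In short: your framing of the problem is correct and matches the paper's own description of the situation, but the statement remains a conjecture because the key estimate in step (iii) is missing, and no argument you give supplies it.
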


Donaldson \cite{Donaldson03} gives a conjectural picture of the asymptotic behavior of the Calabi flow and relates it to the stability conjecture regarding the existence of constant scalar curvature metric. In particular, it is expected that when there exists a constant scalar curvature metric in $(M, [\omega_0], J)$, the Calabi flow exists for all time and converges to a metric with constant scalar curvature (it is contributed to Donaldson's conjecture in literature). Note that by Chen-Tian's theorem \cite{Chentian05, Chentian06}, constant scalar metrics (more generally extremal metrics) in $(M, [\omega_0], J)$ is unique up to diffeomorphism. The longtime existence problem is still largely open (complex dimension is 2 or higher) except for a few special cases.  In the present paper, we assume the following, \\

{\bf Assumption}: suppose the Calabi flow with any initial potential $\phi$ exists for all time with a uniform Ricci bound (for example, we assume a bound like $|Ric(t)|\leq C(\omega, \omega_\phi, |\phi|_{C^4})$ depending only on initial data and background geometry, but not on time). Note that by a result joint with X.X. Chen \cite{Chen-He}, the assumption on Ricci curvature actually implies the longtime existence.
\\

The main result in this paper is to prove the following,

\begin{thm}\label{T-1} With the assumption above, if there is a constant scalar curvature metric  in $(M, [\omega_0], J)$ and suppose there is no holomorphic vector field on $(M, J)$,   then the Calabi flow converges to a constant scalar curvature metric for any initial metric in $(M, [\omega_0], J)$.  
\end{thm}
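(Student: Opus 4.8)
The plan is to play the a priori estimates furnished by the bounded--Ricci hypothesis against the monotonicity of the Calabi energy along the flow, and then invoke the uniqueness of the constant scalar curvature metric. Throughout I normalize the potentials by $\int_M \phi(t)\,\omega_0^n=0$, which affects neither the metrics $\omega_{\phi(t)}$ nor the $K$--energy $K(\phi(t))$, and I write $\mathrm{Ca}(\phi)=\int_M(R_\phi-\underline R)^2\,\omega_\phi^n$ and $\mathcal D u=\bar\partial\nabla^{1,0}u$, so that $\mathcal D u=0$ iff $\nabla^{1,0}u$ is holomorphic. \textbf{Step 1: uniform regularity.} The first task is $t$--independent bounds $\|\phi(t)\|_{C^k(\omega_0)}\le B_k$ for every $k$. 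Because $[\omega_0]$ admits a cscK metric and $(M,J)$ has no holomorphic vector field, Mabuchi's $K$--energy is proper in the sense of Tian (cf. \cite{Chentian05,Chentian06}); since $K(\phi(t))$ is non--increasing along the Calabi flow the $\phi(t)$ remain in a fixed bounded sublevel set of $K$, which prevents $\omega_{\phi(t)}$ from degenerating. Combining this with the uniform Ricci bound and the fourth--order parabolic estimates of \cite{Chen-He} gives first a uniform equivalence $C^{-1}\omega_0\le\omega_{\phi(t)}\le C\omega_0$ and then the full $C^k$ bounds; in particular $\{\phi(t)\}_{t\ge 0}$ is precompact in $C^\infty$.

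\textbf{Step 2: subsequential limits are cscK, and $\mathrm{Ca}\to0$.} Along the Calabi flow there is the classical identity (Calabi \cite{Calabi82})
\[
\frac{d}{dt}\,\mathrm{Ca}(\phi(t))=-2\int_M |\mathcal D R_{\phi(t)}|^2\,\omega_{\phi(t)}^n\le 0 ,
\]
so $\mathrm{Ca}(\phi(t))$ decreases and $\int_0^\infty\!\!\int_M|\mathcal D R_\phi|^2\,\omega_\phi^n\,dt<\infty$. Hence some $t_j\to\infty$ has $\|\mathcal D R_{\phi(t_j)}\|_{L^2(\omega_{\phi(t_j)})}\to0$; by Step 1 a subsequence of $\phi(t_j)$ converges in $C^\infty$ to some $\phi_\infty$, and in the limit $\mathcal D R_{\phi_\infty}=0$, i.e. $\nabla^{1,0}R_{\phi_\infty}$ is holomorphic. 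Since there is no such field, $R_{\phi_\infty}\equiv\underline R$, so $\omega_{\phi_\infty}$ is cscK and $\mathrm{Ca}(\phi_\infty)=0$; by the monotonicity above this forces $\mathrm{Ca}(\phi(t))\to0$ as $t\to\infty$.

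\textbf{Step 3: convergence of the whole flow.} The cscK metric in $[\omega_0]$ is unique (Chen--Tian \cite{Chentian05,Chentian06}; the uniqueness holds on the nose because $\Aut_0(M,J)$ is trivial), so there is a unique normalized potential $\phi_*$ with $\omega_{\phi_*}$ of constant scalar curvature. Given any sequence $t_k\to\infty$, Step 1 extracts a $C^\infty$--limit $\psi$ of a subsequence; since $\mathrm{Ca}(\phi(t))\to0$ we get $R_\psi\equiv\underline R$, hence $\psi=\phi_*$. Thus every subsequential limit is $\phi_*$, and by $C^\infty$--precompactness $\phi(t)\to\phi_*$ in $C^\infty$; equivalently $\omega_{\phi(t)}$ converges smoothly to the cscK metric $\omega_{\phi_*}$, which up to the normalization is $\omega_0$.

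\textbf{The main obstacle} is Step 1: turning the assumed bound on $|\mathrm{Ric}(\omega_{\phi(t)})|$ into genuinely $t$--uniform control of all derivatives of $\phi(t)$. This needs both a non--degeneration mechanism for $\omega_{\phi(t)}$ — where the boundedness of the sublevel sets of the $K$--energy, hence the hypotheses that a cscK metric exists and that there are no holomorphic vector fields, appears to be essential — and the higher--order parabolic bootstrap for this quasilinear fourth--order flow from \cite{Chen-He}. Once those uniform estimates are in hand, Steps 2--3 are soft: they use only compactness and the uniqueness of the cscK metric, and in particular no \L ojasiewicz--Simon inequality is required, precisely because the set of cscK metrics in $[\omega_0]$ is a single point.
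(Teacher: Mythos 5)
Your Steps 2--3 are sound as conditional arguments, but Step 1 --- the assertion of $t$-uniform $C^k$ bounds, hence $C^\infty$-precompactness of $\{\phi(t)\}$ --- is a genuine gap, and it is precisely the difficulty the paper is organized to avoid. The justification you offer does not hold up: Chen--Tian \cite{Chentian05, Chentian06} prove uniqueness of cscK metrics and partial regularity of geodesics, not properness of the $K$-energy; the implication ``cscK exists and no holomorphic vector fields $\Rightarrow$ $K$-energy proper'' was not available (it is one direction of Tian's properness conjecture, a much later and much harder theorem). Moreover, even granting properness, bounded sublevel sets of $K$ only control a functional of $\phi$ (an $I$- or $J$-type quantity), which does not prevent $\omega_{\phi(t)}$ from degenerating; and the hypothesized uniform Ricci bound by itself gives only $|\dot\phi|=|R_\phi-\underline R|\leq C$, hence a potential bound growing linearly in $t$, not a time-uniform one. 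The compactness machinery of \cite{Chen-He} needs a Ricci bound \emph{together with} non-degeneration of the metrics, so the bootstrap you invoke cannot start. In short, the chain ``cscK $+$ no holomorphic fields $\Rightarrow$ precompact flow line'' is asserted, not proved, and no known soft argument supplies it.

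The paper's actual proof takes a different route that never requires global uniform estimates along a single flow line. It runs a continuity method in the \emph{initial data}: for the path $\phi_s=s\phi$ it considers the set $S$ of parameters whose flow converges to $0$, and shows $S$ is open and closed. Two ingredients replace your Step 1. First, the evolution variational inequality of Section 2 yields Corollary \ref{C-C}: every long-time flow line has the same asymptotic Calabi energy, and comparison with the stationary solution at $\omega_0$ forces $\cC(\phi_s(t))\to 0$ for \emph{every} $s$ --- this substitutes for your monotonicity-plus-subsequence argument and needs no compactness. Second, openness follows from finite-time stability in the initial data (Proposition \ref{P-2}) combined with the Chen--He stability theorem near $\omega_0$, while closedness is a contradiction argument localized at the first exit time $t_i$ from a small $C^5$-ball around $0$: on $[t_i-1,t_i+1]$ one has the bound $|\phi_{s_i}(t)|_{C^5}\leq\delta$ by the choice of $t_i$, which is what makes the compactness theorem applicable there, and the Ricci bound is used only to upgrade the resulting convergence to $C^5$ at $t=0$ so as to contradict $|\psi_i(0)|_{C^5}=\delta$. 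If you want to salvage your outline, you must either prove the uniform non-degeneration of $\omega_{\phi(t)}$ directly (which is the open heart of the matter) or restructure the argument so that compactness is only ever invoked on time intervals where smallness of the potential is already guaranteed, as the paper does.
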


\begin{rmk}The main motivation of the result is that Conjecture \ref{C-1} should imply that the Calabi flow converges to a constant scalar curvature metric when assuming such a metric exists. The Ricci curvature assumption is only technical in the proof. It will be interesting to drop this assumption. 
\end{rmk}

{\bf Acknowledgement:} I thank Bing Wang and Song Sun for valuable discussions, and I thank Prof. X.X. Chen for encouragements.   I am also grateful to J. Streets for sending me his recent preprint \cite{Streets} which inspires the consideration in Section 2. The author is partially supported by an NSF grant. 

\section{Evolution variational inequality along the Calabi flow}

In this section we prove a version of evolution variational inequality along the Calabi flow.

\begin{prop}Let $\phi_t$ be a smooth solution of the Calabi flow and $\psi\in \cH$ be any point (not on $\phi_t$). Then
\begin{equation}\label{EVI-1}
K(\psi)-K(\phi_t)\geq d(\psi, \phi_t) \frac{d}{dt} d(\psi, \phi_t)
\end{equation}
As a consequence, we have
\begin{equation}\label{EVI}
2s(K(\psi)-K(\phi_{t+s}))\geq d^2(\psi, \phi_{t+s})-d^{2}(\psi, \phi_t),
\end{equation}
where $d$ is a natural distance function on $\cH$ and it will be reviewed later. 
\end{prop}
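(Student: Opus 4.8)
The plan is to recognize this as the parabolic analogue of the classical convexity inequality for gradient flows in metric spaces (the "evolution variational inequality" à la Ambrosio–Gigli–Savaré), transported to the infinite-dimensional setting $(\cH, d)$ equipped with the Mabuchi metric, along which the Calabi flow is the downward gradient flow of the $K$-energy. The first and central input I would invoke is the *geodesic convexity of the $K$-energy*: along any (weak $C^{1,1}$) geodesic $\gamma(s)$ in $\cH$ joining $\psi$ to $\phi_t$, the function $s \mapsto K(\gamma(s))$ is convex. This is the theorem of Berman–Berndtsson (and, in the smooth setting, goes back to the ideas of Chen and Donaldson). Combined with the fact that the Calabi flow $\dot\phi_t = R_{\phi_t} - \underline R$ is exactly $-\operatorname{grad} K$ at $\phi_t$, convexity gives the first-variation estimate
\[
K(\psi) - K(\phi_t) \geq \langle \operatorname{grad} K(\phi_t), \, \gamma'(0)\rangle = -\langle \dot\phi_t, \gamma'(0)\rangle,
\]
where $\gamma'(0)$ is the initial tangent vector of the minimal geodesic from $\phi_t$ toward $\psi$ and the inner product is the Mabuchi one at $\phi_t$.

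Next I would compute $\frac{d}{dt} d(\psi, \phi_t)$ by a first-variation-of-arclength argument: moving the endpoint $\phi_t$ with velocity $\dot\phi_t$, the derivative of the distance to the fixed point $\psi$ equals $-\langle \dot\phi_t, \gamma'(0)\rangle / |\gamma'(0)|$ if we normalize, or more cleanly $d(\psi,\phi_t)\,\frac{d}{dt} d(\psi,\phi_t) = -\langle \dot\phi_t, v\rangle$ where $v$ is the tangent vector at $\phi_t$ of the (affinely parametrized) geodesic of length $d(\psi,\phi_t)$ pointing from $\phi_t$ to $\psi$, so that $|v| = d(\psi,\phi_t)$. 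Matching this with the convexity inequality above (using the same geodesic and the same normalization) yields precisely \eqref{EVI-1}:
\[
K(\psi) - K(\phi_t) \geq d(\psi,\phi_t)\,\frac{d}{dt} d(\psi,\phi_t).
\]
Then \eqref{EVI} follows by a routine integration: since $\frac{d}{ds}\bigl(\tfrac12 d^2(\psi,\phi_{s})\bigr) = d(\psi,\phi_s)\frac{d}{ds}d(\psi,\phi_s) \leq K(\psi) - K(\phi_s)$, and since $K$ is nonincreasing along the Calabi flow so that $K(\phi_s) \geq K(\phi_{t+s})$ for $s$ in the relevant range — more directly, integrating the differential inequality from $t$ to $t+s$ and bounding $K(\phi_\tau) \geq K(\phi_{t+s})$ on $[t, t+s]$ — we obtain $\tfrac12 d^2(\psi,\phi_{t+s}) - \tfrac12 d^2(\psi,\phi_t) \leq s\bigl(K(\psi) - K(\phi_{t+s})\bigr)$, which is \eqref{EVI}.

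The main obstacle is regularity, on two fronts. First, the minimal geodesic in $\cH$ joining $\psi$ to $\phi_t$ is in general only a weak ($C^{1,1}$) solution of the geodesic equation (Chen's theorem), not smooth, so the differentiation of arclength and the pairing $\langle \operatorname{grad} K(\phi_t), \gamma'(0)\rangle$ must be justified — either by working with Chen's $\epsilon$-geodesics and passing to the limit, or by appealing to the now-established first-variation formula for the Mabuchi distance and the fact that $K$ is still convex and lower semicontinuous along $C^{1,1}$ geodesics with a well-defined slope. Second, one must make sense of $\frac{d}{dt} d(\psi, \phi_t)$ itself: a priori the distance is only locally Lipschitz in $t$ (so differentiable a.e.), which is why the clean statement \eqref{EVI} is phrased as an integrated inequality rather than a pointwise one. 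I would therefore present the argument so that \eqref{EVI} — the form actually used later — is derived directly from the convexity of $K$ along geodesics and the metric-space structure of $(\cH, d)$, circumventing any need to differentiate $d(\psi,\phi_t)$ pointwise, with \eqref{EVI-1} stated as the formal/a.e. infinitesimal version.
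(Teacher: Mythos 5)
Your proposal is correct and follows essentially the same route as the paper: the paper combines Chen's weak convexity of the $K$-energy along $C^{1,1}$ geodesics (exactly the subgradient inequality you extract from geodesic convexity) with Chen's first-variation formula for the Mabuchi distance along a smooth curve --- which in particular gives the $C^1$-differentiability of $t\mapsto d(\psi,\phi_t)$ that you were worried about, so no a.e./$\epsilon$-geodesic workaround is needed --- and then integrates from $t$ to $t+s$ using monotonicity of $K$ exactly as you do. The only cosmetic difference is attribution: the paper cites Chen's results (\emph{The space of K\"ahler metrics} and \emph{Space of K\"ahler metrics III}) rather than Berman--Berndtsson.
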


Evolution variational inequality is well-known for the gradient flow of convex functionals on Hilbert spaces. In a recent preprint, J. Streets \cite{Streets} proved \eqref{EVI} in the frame work of $K$-energy minimizing movements (see Mayer \cite{Mayer02}, for example,  for some general results on minimizing movement of convex functionals on NPC spaces and Streets \cite{Streets12} for $K$-energy minimizing movement). As a consequence of this inequality, Streets also proved that the Calabi flow, when it exists for all time,  minimizes $\cK$-energy (when it is bounded below) and also minimizes the Calabi energy. 

We shall give an alternative proof of  the evolution inequality \eqref{EVI-1} and \eqref{EVI} is a direct consequence.
Our proof does not rely on the framework of minimizing movement. Actually it is rather straightforward given the results of Chen \cite{Chen00, Chen09} on  the geometric structure of $\cH$ and weak convexity of $K$-energy.
One motivation for this proof is to understand the similar picture for Kahler-Ricci flow on Fano manifolds.
In particular, we are interested in 

\begin{prob}\label{Q-3}Suppose $(M, [\omega_0], J)$ is a Fano manifolds and suppose $\cF$ functional is bounded below. Does the Kahler-Ricci flow minimizes $\cF$ functional? In particular, we would like to know whether  the Kahler-Ricci flow minimizes 
\[
H(\omega)=\int_M he^h \omega^n,
\]
where $h$ is the normalized Ricci potential such that $Ric(\omega)-\omega=\i \p\bar \p h$ and
\[
\int_M e^h \omega^n=\int_M \omega^n.
\]
\end{prob}

We first recall the metric structure on $\cH$, the space of Kahler potentials. Mabuchi \cite{Mabuchi86} introduced a Riemannian metric on $\cH$ and he also proves that this is formally a symmetric space with nonpositive curvature (See \cite{Semmes, Donaldson99} also). Chen \cite{Chen00} proved that $\cH$ is actually a metric space and it is convex by $C^{1, 1}$ geodesics (potential with bounded mixed derivative), by confirming partially conjectures of Donaldson \cite{Donaldson99}, in which he set up an ambitious program which tights up the existence of constant scalar curvature with the geometry of $\cH$.

Our proof of evolution variational inequality relies on two results of Chen \cite{Chen00, Chen09} and it is a rather straightforward consequence of his results.
The first result we need is the following weak convexity of $K$-energy proved in \cite{Chen09}.  
 Let $A(t), 0\leq t\leq 1$ be a geodesic in $\cH$ with two end points $A(0)=\phi, A(1)=\psi$, then $K$-energy is weakly convex in the sense that
\begin{equation}\label{E-chen1}
K(\psi)-K(\phi)\geq \frac{d}{dt} K(A(t))|_{t=0}=\int_M \dot A(0) (\underline{R}-R_{\phi})\omega_\phi^n. 
\end{equation}

The second result is about the derivative of distance function $d$ on $\cH$, which is proved in \cite{Chen00}. 
Suppose $\phi=\phi(s)$ is a smooth curve $C$ on $\cH$, then for any $\psi\in \cH$ not on the curve $C$, $d(\psi, \phi)$ is $C^1$ on $s$ and in particular,

\begin{equation}\label{E-chen2}
\frac{d}{ds}d(\psi, \phi)= -d(\psi, \phi)^{-1}\int_M \dot A(0) \frac{\p\phi}{\p s}  \omega_{\phi}^n,
\end{equation}
where $A(t)$ is the geodesic with $A(0)=\psi(s), A(1)=\psi$.

Given \eqref{E-chen1} and \eqref{E-chen2}, suppose $\phi(s)$ solves the Calabi flow. It follows that

\[
K(\psi)-K(\phi(s))-d(\psi, \phi(s))\frac{d}{ds}d(\psi, \phi(s))\geq \int_M \dot A(0) (\underline{R}-R_\phi+\frac{\p\phi}{\p s})\omega^n_{\phi(s)}=0
\]

This proved \eqref{EVI-1}, while \eqref{EVI} can be obtained directly by taking integration from $t$ to $t+s$, noting that $K$-energy is decreasing along the flow. 
As a corollary, it follows that

\begin{cor}For any initial potential $\phi_0\in \cH$, if the Calabi flow has a long time solution $\phi(t)$ such that $\phi(0)=\phi_0$, then 
\[\lim_{t\rightarrow \infty}K(\phi(t))=\inf_{\psi\in \cH}K(\psi).\]
\end{cor}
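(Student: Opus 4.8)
The plan is to obtain the corollary as an immediate consequence of the evolution variational inequality \eqref{EVI}, the only additional input being the monotonicity of the $K$-energy along the Calabi flow. Since the flow is the gradient flow of $K$, one has the standard identity
\[
\frac{d}{dt}K(\phi(t))=-\int_M\bigl(R_{\phi(t)}-\underline{R}\bigr)^2\omega_{\phi(t)}^n\le 0,
\]
so $t\mapsto K(\phi(t))$ is non-increasing and $L:=\lim_{t\to\infty}K(\phi(t))=\inf_{t\ge 0}K(\phi(t))$ exists in $[-\infty,\infty)$. Because $\{\phi(t):t\ge 0\}\subset\cH$, we trivially have $L\ge\inf_{\psi\in\cH}K(\psi)$, so the whole content is to prove $L\le K(\psi)$ for every $\psi\in\cH$.

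Fix $\psi\in\cH$. First suppose $\psi$ is not a point of the curve $t\mapsto\phi(t)$, so that \eqref{EVI} is applicable with this $\psi$. Taking $t=0$ in \eqref{EVI}: for every $s>0$,
\[
2s\bigl(K(\psi)-K(\phi(s))\bigr)\ge d^2(\psi,\phi(s))-d^2(\psi,\phi(0))\ge -d^2(\psi,\phi(0)),
\]
where $d(\psi,\phi(0))<\infty$ since by Chen's theorem any two potentials of $\cH$ are joined by a $C^{1,1}$ geodesic, necessarily of finite length. Dividing by $2s$ and rearranging gives $K(\phi(s))\le K(\psi)+d^2(\psi,\phi(0))/(2s)$; letting $s\to\infty$ yields $L\le K(\psi)$. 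If instead $\psi=\phi(t_0)$ for some $t_0\ge 0$, then by monotonicity $K(\psi)=K(\phi(t_0))\ge L$, so $L\le K(\psi)$ again. Hence $L\le K(\psi)$ for all $\psi\in\cH$, i.e.\ $L\le\inf_{\psi\in\cH}K(\psi)$, and together with the reverse inequality above this gives $L=\inf_{\psi\in\cH}K(\psi)$.

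I do not expect a genuine obstacle: once \eqref{EVI} is available the argument is a two-line estimate. The only points to watch are that the conclusion should (and does) remain correct when $K$ is unbounded below — the argument never uses boundedness and simply forces $L=-\infty$ in that case — and the harmless case distinction according to whether the test potential $\psi$ lies on the flow. One could equivalently argue by contradiction: if $K(\psi)<L$ for some $\psi$, then \eqref{EVI} with $t=0$ would force $d^2(\psi,\phi(s))\le d^2(\psi,\phi(0))-2s\,(L-K(\psi))\to-\infty$, which is absurd; but the direct estimate is cleaner and handles the unbounded case uniformly.
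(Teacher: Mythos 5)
Your proof is correct and follows essentially the same route as the paper: apply \eqref{EVI} with $t=0$, discard the nonnegative term $d^2(\psi,\phi(s))$, and let $s\to\infty$; the paper merely phrases this as a contradiction (the variant you mention at the end), while you give the direct estimate and also handle the harmless case $\psi=\phi(t_0)$ explicitly.
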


\begin{proof}This is proved in \cite{Streets}. Since the proof is rather short, we repeat it here. Suppose otherwise, then there exists $\delta>0$ and $\psi\in \cH$ such that for any $t$ sufficiently large,
\[
K(\psi)-K(\phi(t))\leq -\delta.
\]
In \eqref{EVI}, let $t=0$, $s\rightarrow \infty$, we get 
\[
-2s\delta \geq -d(\psi, \phi_0)^2.
\]Contradiction. 

\end{proof}

Suppose the Calabi flow $\phi(t)$ exists for all time with initial metric $\phi_0$. We denote
\[
A(\phi_0)=\lim_{t\rightarrow \infty}\cC(\phi(t))
\]  
Clearly the limit exists and $A(\phi_0)$ is a nonnegative number.

\begin{cor}\label{C-C}
Suppose $\phi(t)$ and $\psi(t)$ are two long-time solutions of the Calabi flow with initial metrics $\phi_0$ and $\psi_0$ respectively. Then
\[
A(\phi_0)=A(\psi_0). 
\]
In other words, if the Calabi flow exists for all time, then the Calabi energy has the same energy level at infinity. 
\end{cor}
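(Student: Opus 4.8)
The plan is to exploit the evolution variational inequality \eqref{EVI} together with the fact that the Calabi flow is the reduced gradient flow of the Calabi energy, so that $\cC(\phi(t))$ is monotone decreasing and $A(\phi_0)=\lim_{t\to\infty}\cC(\phi(t))$ is well defined. First I would record the identity $\frac{d}{dt}K(\phi(t)) = -\cC(\phi(t))$ along the flow (the $K$-energy is the primitive of the Calabi energy along Calabi trajectories), so that for any $T>0$,
\[
K(\phi(0))-K(\phi(T)) = \int_0^T \cC(\phi(t))\,dt .
\]
Since $\cC(\phi(t))\to A(\phi_0)$ monotonically, dividing by $T$ and letting $T\to\infty$ gives $\lim_{T\to\infty}\frac1T\big(K(\phi(0))-K(\phi(T))\big) = A(\phi_0)$; equivalently $K(\phi(T)) = -A(\phi_0)\,T + o(T)$ as $T\to\infty$. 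The same computation applied to $\psi(t)$ gives $K(\psi(T)) = -A(\psi_0)\,T + o(T)$.

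The second step is to compare the two linear growth rates using \eqref{EVI} with $\psi$ replaced by a point $\psi(\tau)$ on the second trajectory and $\phi_{t+s}$ a point on the first. Fix $\tau$ and apply \eqref{EVI} with $\psi = \psi(\tau)$, $t=0$, and $s=T$:
\[
2T\big(K(\psi(\tau))-K(\phi(T))\big) \geq d^2(\psi(\tau),\phi(T)) - d^2(\psi(\tau),\phi(0)) \geq -d^2(\psi(\tau),\phi_0).
\]
Hence $K(\phi(T)) \leq K(\psi(\tau)) + \frac{1}{2T}d^2(\psi(\tau),\phi_0)$ for every $\tau$ and every $T$. Choosing $\tau$ and $T$ to go to infinity together, and substituting the asymptotics $K(\phi(T)) = -A(\phi_0)T+o(T)$ and $K(\psi(\tau)) = -A(\psi_0)\tau + o(\tau)$, one wants to extract $A(\phi_0)\geq A(\psi_0)$; by symmetry the reverse inequality follows, giving equality. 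The bookkeeping is to pick, say, $\tau = T$: then $-A(\phi_0)T + o(T) \leq -A(\psi_0)T + o(T) + \frac{1}{2T}d^2(\psi(T),\phi_0)$, so after dividing by $T$ and letting $T\to\infty$ we get $A(\phi_0)\geq A(\psi_0)$ provided the term $\frac{1}{T^2}d^2(\psi(T),\phi_0)$ vanishes in the limit.

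The main obstacle is precisely controlling the growth of $d(\psi(T),\phi_0)$: one must show $d(\psi(T),\phi_0) = o(T)$ (indeed $o(T)$ suffices for the argument above, since it enters divided by $T$ inside a quantity already divided by $T$; in fact we only need $d(\psi(T),\phi_0)/T$ bounded). This should follow from an a priori speed estimate along the Calabi flow: since $d$ is the Riemannian distance for Mabuchi's metric and $\dot\psi = R_\psi-\underline R$, the length of the trajectory up to time $T$ is $\int_0^T \|\dot\psi(t)\|_{\psi(t)}\,dt = \int_0^T \cC(\psi(t))^{1/2}\,dt \leq \cC(\psi_0)^{1/2}\,T$ because $\cC$ is decreasing; hence $d(\psi(T),\psi_0)\leq \cC(\psi_0)^{1/2}T$ and $d(\psi(T),\phi_0)\leq d(\phi_0,\psi_0)+\cC(\psi_0)^{1/2}T$, which is the needed linear bound. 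Once this estimate is in place the corollary follows by combining it with the two displays above and symmetrizing in $\phi_0\leftrightarrow\psi_0$; the only subtlety is to make sure the $o(T)$ error terms from the $K$-energy asymptotics are genuinely $o(T)$ and not merely $O(T)$, which is guaranteed by the monotone convergence $\cC(\phi(t))\to A(\phi_0)$.
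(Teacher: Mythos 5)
Your overall strategy (combine \eqref{EVI} with the linear asymptotics $K(\phi(T)) = K(\phi_0) - A(\phi_0)T + o(T)$, which do follow correctly from the monotone convergence $\cC(\phi(t))\downarrow A(\phi_0)$) is the right one and is close in spirit to the argument in the paper, but the step where you control the distance term does not close, and that is exactly where the one genuinely new ingredient is needed. After dividing your inequality by $T$ the error term is $\tfrac12\bigl(d(\psi(T),\phi_0)/T\bigr)^2$, so you need $d(\psi(T),\phi_0)=o(T)$; your parenthetical remark that boundedness of $d(\psi(T),\phi_0)/T$ suffices is not correct. The speed estimate you propose gives only $d(\psi(T),\psi_0)\le\int_0^T\cC(\psi(t))^{1/2}\,dt$, and since $\cC(\psi(t))\downarrow A(\psi_0)$ this is $A(\psi_0)^{1/2}T+o(T)$ --- genuinely linear whenever $A(\psi_0)>0$, which is precisely the case you cannot exclude a priori. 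Feeding this bound into your inequality with $\tau=cT$ and optimizing over $c$ yields only $A(\phi_0)\ge\tfrac12 A(\psi_0)$, and symmetrizing gives $\tfrac12 A(\psi_0)\le A(\phi_0)\le 2A(\psi_0)$, not equality.

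The missing ingredient, which the paper invokes, is the Calabi--Chen theorem \cite{CalabiChen} that the Mabuchi distance between two Calabi flow trajectories is non-increasing: $d(\psi(t),\phi(t))\le d(\psi_0,\phi_0)$. With this, one applies \eqref{EVI} over a time interval of \emph{fixed} length (the paper takes $s=1$) starting at time $t$, with reference point $\psi(t)$, so the offending distance term is bounded by the constant $d^2(\psi_0,\phi_0)$ uniformly in $t$:
\[
2\bigl(K(\psi(t))-K(\phi(t+1))\bigr)\ \ge\ -d^2(\psi(t),\phi(t))\ \ge\ -d^2(\psi_0,\phi_0).
\]
If $A(\phi_0)<A(\psi_0)$, then $K(\psi(t))-K(\phi(t))=K(\psi_0)-K(\phi_0)+\int_0^t(\cC(\phi(s))-\cC(\psi(s)))\,ds\le C-\delta t$ for some $\delta>0$ and all large $t$, while $K(\phi(t))-K(\phi(t+1))\le\cC(\phi_0)$, so the left-hand side tends to $-\infty$, a contradiction. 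Your argument becomes correct once you replace the trajectory-length bound by this contraction property (and, as a minor point, once you handle the requirement in \eqref{EVI} that the reference point not lie on the flow line, e.g.\ by reducing to the case where the two trajectories are disjoint).
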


\begin{proof}This is also proved in \cite{Streets}. 
We can assume that the curves $\phi(t)$ and $\psi(t)$ have no intersection. Otherwise we can assume that $\psi_0=\phi(t)$ for some $t$ for example. In this case it is clear that $A(\phi_0)=A(\psi_0)$.  
Suppose we have $A(\phi_0)=A(\psi_0)-3\delta$ for some $\delta>0$.  Since the statement only concerns the asymptotic behavior of the Calabi energy along $\phi(t)$ and  $\phi(t)$, we can assume that 
for any $t$ (otherwise let $t$ be sufficiently large),
\begin{equation}\label{E-2.5}
\cC(\phi_t)\leq \cC(\psi_t)-\delta
\end{equation}
Note that if $\phi(t)$ is a solution of Calabi flow, by \eqref{EVI}, we have, for $\psi$ not on the curve $\phi(t)$,
\begin{equation}
2s(K(\psi)-K(\phi(t+s)))\geq d^2(\psi, \phi(t+s))-d^2(\psi, \phi(t)).  
\end{equation}
Take $s=1$, and $\psi=\psi(t)$, we have
\begin{equation}\label{E-2.6}
d^2(\psi(t), \phi(t))+2(K(\psi(t))-K(\phi(t)))+2(K(\phi(t))-K(\phi(t+1)))\geq 0. 
\end{equation}
By a result of Calabi-Chen, we know that 
\[
d^2(\psi(t), \phi(t))\leq d^2(\phi_0, \psi_0).
\]
Also we have
\[
K(\phi(t))-K(\phi(t+1))=\int_t^{t+1} \cC(\phi(s)) ds\leq \cC(\phi_0). 
\]
On the other hand, 
\[
K(\psi_0)-K(\psi(t))=\int_0^t \cC(\phi(s))ds
\]
and 
\[
K(\phi_0)-K(\phi(t))=\int_0^t\cC(\psi(s))ds
\]
It follows that
\[
K(\psi(t))-K(\phi(t))=\int_0^t (\cC(\phi(s))-\cC(\psi(s)))ds+K(\psi_0)-K(\phi_0).
\]
By \eqref{E-2.5}, we have
\[
K(\psi(t))-K(\phi(t))\leq -\delta t +C 
\]
This contradicts \eqref{E-2.6} when $t$ is large enough. 
\end{proof}

\begin{rmk}
The results in this section are mostly proved by J. Streets \cite{Streets} using the framework of minimizing movement. Our main motivation in this section is to establish \eqref{EVI-1} along the Calabi flow using a direct approach. We hope this argument will give an approach to Question \ref{Q-3}. 
\end{rmk}

\section{The convergence of the Calabi flow}

We prove our main theorem in this section. Tian-Zhu \cite{TZ05} proved the convergence of Kahler-Ricci flow to a Kahler-Ricci soliton using a continuity method to deal with the Kahler-Ricci flow (see \cite{TZ3} also). Our method mimics their argument using continuity method and the level set of certain functional. In Kahler-Ricci flow they use Perelman's entropy and we use the Calabi energy here.

First we can establish a finite time stability result. Given a fixed background metric $\omega$, we define a set $B=B(\l, \L, K, \omega)$ as follows,
\[
B=\{\phi\in \cH: \l\omega\leq \omega_\phi\leq \L\omega, \|\phi\|_{C^{3, \alpha}}\leq K\}.
\]
In \cite{Chen-He}, we proved a short time existence result of the Calabi flow with smoothing property and smooth dependence of initial data (see Theorem 3.2 in \cite{Chen-He}). 
\begin{prop}\label{P-2}
Suppose the Calabi flow exists in the maximal time interval $[0, T)$ with initial potential $\phi_0$. Then for any $0<T_0<t$, there exists $\epsilon_0=\epsilon_0(\phi_0, T_0, \omega)$ such that
for any potential $|\psi-\phi_0|_{C^5}\leq \epsilon_0$, the Calabi flow exists with initial potential $\psi$ exists in $[0, T_0]$ and 
\[
|\psi(T_0)-\phi_0(T_0)|_{C^5}\leq C=C(\epsilon_0, \phi_0, T_0, \omega),
\] 
where $C\rightarrow 0$ when $\epsilon_0\rightarrow 0$. 
\end{prop}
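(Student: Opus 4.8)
The plan is to run the standard parabolic continuation argument: combine the a priori control of the reference flow $\phi_0(t)$ on the compact subinterval $[0,T_0]\subset[0,T)$ with the short-time existence, uniqueness and continuous dependence on initial data supplied by Theorem 3.2 of \cite{Chen-He}, and then propagate the closeness estimate through a fixed finite number of short time steps.

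First I would record uniform bounds on the reference flow. Since $\phi_0(t)$ solves the Calabi flow on $[0,T)$, the subinterval $[0,T_0]$ is compact, and $\phi_0$ is smooth at $t=0$, the smoothing estimates of \cite{Chen-He} give constants $\l_0,\L_0,K_0$ with $\phi_0(t)\in B_0:=B(\l_0,\L_0,K_0,\omega)$ for every $t\in[0,T_0]$. Fix the enlarged set $B_1:=B(\l_0/2,\,2\L_0,\,2K_0,\,\omega)$; note that any $C^5$-perturbation of size less than some $\delta_0=\delta_0(B_0,B_1)$ of an element of $B_0$ lies in $B_1$, since a $C^5$ bound controls both the $C^{3,\alpha}$ norm and the Hessian, hence the metric bounds.

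Next, by Theorem 3.2 of \cite{Chen-He} applied to initial data in $B_1$, there is $\tau=\tau(B_1,\omega)>0$ so that the Calabi flow starting at any $\phi\in B_1$ exists on $[0,\tau]$, and for any two initial potentials $\phi,\phi'\in B_1$ one has a stability bound
\[
\sup_{t\in[0,\tau]}|\phi'(t)-\phi(t)|_{C^5}\ \le\ \Psi\big(|\phi'-\phi|_{C^5}\big),
\]
where $\Psi$ depends only on $B_1$ and $\omega$, is increasing, and satisfies $\Psi(s)\to 0$ as $s\to 0$ (in fact one may take $\Psi$ linear). Now set $N=\lceil T_0/\tau\rceil$, $t_j=j\tau$ for $0\le j\le N-1$ and $t_N=T_0$, and choose $\epsilon_0$ small enough that $\Psi^{\circ j}(\epsilon_0)<\delta_0$ for every $j=0,\dots,N$ (finitely many conditions). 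An induction on $j$ then shows that $\psi(t)$ exists on $[0,t_j]$, that $|\psi(t_j)-\phi_0(t_j)|_{C^5}\le\Psi^{\circ j}(\epsilon_0)$, and that $\psi(t_j)\in B_1$: for the inductive step, $\phi_0(\cdot)$ on $[t_j,t_{j+1}]$ is by uniqueness the Calabi flow issuing from $\phi_0(t_j)\in B_0\subset B_1$, $\psi(t_j)\in B_1$ by the previous step, and the displayed estimate on this interval gives existence of $\psi$ up to $t_{j+1}$ together with $|\psi(t_{j+1})-\phi_0(t_{j+1})|_{C^5}\le\Psi^{\circ(j+1)}(\epsilon_0)<\delta_0$, so $\psi(t_{j+1})\in B_1$. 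Taking $j=N$ yields existence of $\psi$ on $[0,T_0]$ and $|\psi(T_0)-\phi_0(T_0)|_{C^5}\le\Psi^{\circ N}(\epsilon_0)=:C$, with $C\to 0$ as $\epsilon_0\to 0$ since $N$ is a fixed integer and a finite composition of moduli of continuity is again a modulus of continuity.

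The one point that needs care — and the only real obstacle — is making the iteration close: one must know that a single short existence time $\tau$ and a single stability modulus $\Psi$ serve at every one of the $N$ steps. This is exactly what is gained from the fact that the estimates in Theorem 3.2 of \cite{Chen-He} depend on the initial data only through the triple $(\l,\L,K)$ cutting out the ball $B$. Step 1 confines the reference flow, and with it every perturbed iterate, to the fixed set $B_1$, so the same $\tau$ and $\Psi$ apply throughout; the finiteness of $N$ then prevents the accumulated error $\Psi^{\circ N}(\epsilon_0)$ from escaping control. Implicitly one also uses uniqueness of the Calabi flow from smooth initial data, so that the restriction of $\phi_0(\cdot)$ to each $[t_j,t_{j+1}]$ really is the Theorem 3.2 flow from $\phi_0(t_j)$; this too is part of \cite{Chen-He}.
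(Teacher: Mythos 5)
Your argument is essentially the same as the paper's: confine the reference flow to a fixed set $B$ on $[0,T_0]$, invoke Theorem 3.2 of \cite{Chen-He} for a uniform short existence time and stability modulus on $B$, and iterate over a fixed finite number of steps so the accumulated error stays controlled. The only cosmetic difference is that the paper carries the iteration in the $C^{3,\alpha}$ norm and upgrades to $C^5$ at the end via the smoothing property, whereas you phrase the stability modulus directly in $C^5$; both bookkeepings work.
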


\begin{proof}For any $T_0$ fixed, we know that $\phi_0(t)$ is a smooth path in $\cH$  for $t\in [0, T_0]$. We can then pick up uniform constants $\l, \L, K$ depending on $\phi_0, T_0$ such that
$\phi_0(t)\in B=B(\l, \L, K, \omega)$. 
We shall also assume that there exist a small number $\delta$ such that  if \[\min_{t\in [0, T_0]}|\psi-\phi_0(t)|_{C^{3, \alpha}} \leq \delta\]
then $\psi \in B$. 

By Theorem 3.2 in \cite{Chen-He}, we know  that for any potential $\psi\in B$, there exists  uniform constants $t_0, \epsilon_0$ and $C_0$ depending only on $\l, \L, K$ and $\omega$ such that the Calabi flow solution $\psi(t)$ exists for $[0, t_0]$; moreover,  if
\[
|\psi-\phi_0|_{C^{3, \alpha}} \leq \epsilon_0,
\]
then for any $t\in (0, t_0]$,
\begin{equation}\label{E-3.5}
\begin{split}
&|\psi(t)-\phi_0(t)|_{C^{3, \alpha}} \leq C_0 |\psi-\phi_0|_{C^{3, \alpha}}\\
&
|\psi(t)-\phi_0(t)|_{C^{4, \alpha}} \leq C_0 t^{-1/4} |\psi-\phi_0|_{C^{3, \alpha}}
\end{split}
\end{equation}
The smooth dependence part \eqref{E-3.5} is only emphasized around a constant scalar curvature metric in Theorem 3.2 \cite{Chen-He} (it is mainly used  in the paper to prove the stability of a constant scalar curvature metric along the Calabi flow),  but it holds around any smooth metric $\omega_{\phi_0}$ (it even holds  for initial metric in $C^{2, \alpha}$, see Theorem 3.1 in \cite{Chen-He} and Theorem 2.1 in \cite{He}).
We shall also assume $\epsilon_0\leq \delta$.  Let $k$ be the least integer such that $k\geq T_0/t_0$. Take $\epsilon$ small enough such that $(C_0)^k \epsilon<\epsilon_0$, then for any $\psi\in B$ satisfying
\[
|\psi-\phi_0|_{C^5}\leq \epsilon,
\]
we get that
\[
|\psi(t_0)-\phi_0(t_0)|_{C^{3, \alpha}}\leq C_0\epsilon.  
\]
Clearly $\psi(t_0)$ and $\phi_0(t_0)$ are both in $B$. We can then apply Theorem 3.2 in \cite{Chen-He} to the initial potential $\phi(t_0)$ and $\psi(t_0)$ to get a Calabi flow solution $\psi(t)$ in $[t_0, 2t_0]$. We can repeat this argument to get a solution $\psi(t)$ in $[0, kt_0]$ which contains $[0, T_0]$. (We emphasize that there is no contradiction since  $t_0$ depends on $T_0$  and we know that $T_0\leq kt_0<T$;  we cannot repeat the argument for $t>T_0$.) It then follows from \eqref{E-3.5} that, for any $t\in [0, T_0]$
\[
|\psi(t)-\phi_0(t)|_{C^{3, \alpha}}\leq (C_0)^k\epsilon<\epsilon_0. 
\]
By the smoothing property, we can get that
\[
|\psi(T_0)-\phi_0(T_0)|_{C^5}\leq C (\epsilon_0, \l, \L, K, T_0, \omega),
\]
where $C$ goes to zero when $\epsilon_0$ goes to zero. 
\end{proof}

\begin{rmk}With the assumption on Ricci curvature, then the above finite time stability becomes instant with the compactness theorem in \cite{Chen-He}. 
\end{rmk}

Now we are in the position to prove Theorem \ref{T-1}.

\begin{proof}Note that we assume there is no holomorphic vector field.  Let $\omega_0$ be the unique constant scalar curvature metric in $(M, [\omega_0])$ (uniqueness is proved by Chen-Tian).  
Let $\omega_\phi=\omega_0+\sqrt{-1}\p\bar \p \phi$ (we assume a normalization condition $I(\phi)=0$). Denote 
\[G=\{\phi\in \cH: |\phi(t)|_{C^5}\rightarrow 0, t\rightarrow \infty\}\]
to be the initial potentials such that $\phi(t)$ solves the Calabi flow equation and converges to zero (hence $\omega_{\phi(t)}$ converges to $\omega_0$). Clearly $0\in G$ and let $\phi\in \cH$. Let $\phi_s, 0\leq s\leq 1$ be a smooth path in $\cH$ such that $\phi_0=0, \phi_1=\phi$ (we can choose $\phi_s=s\phi$ say). Denote the corresponding  Calabi flow solution as $\phi_s(t)$. We want to show that $\phi_s(t)\in G$. Now let  
\[
S=\{s\in [0, 1]: \phi_s\in G\}. 
\]
We want to prove $S=[0, 1]$. By the method of continuity, we need to show $S$ is open and closed. The openness follows essentially from the finite time stability and the stability theorem around the constant scalar curvature metric proved in \cite{Chen-He}. In particular, we have

\begin{prop}Suppose $s\in S$, then for $\delta$ small enough $(s-\delta, s+\delta)\cap [0, 1] \in S$. 
\end{prop}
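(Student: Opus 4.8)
The plan is to prove openness of $S$ by concatenating two stability statements: the finite time stability of Proposition~\ref{P-2}, and the stability of the constant scalar curvature metric $\omega_0$ under the Calabi flow established in \cite{Chen-He}. Recall the latter in the form needed here: there is $\epsilon_1=\epsilon_1(\omega_0)>0$ such that every normalized potential $\eta$ with $|\eta|_{C^5}\le \epsilon_1$ belongs to $G$; that is, the Calabi flow with initial potential $\eta$ exists for all time and $|\eta(t)|_{C^5}\to 0$ (indeed exponentially). Thus it is enough to show that for $s'$ close to $s$ the flow $\phi_{s'}(t)$ enters this $\epsilon_1$-ball at some finite time.

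First I would use the hypothesis $s\in S$: by definition $|\phi_s(t)|_{C^5}\to 0$, so I may fix a time $T=T(s)$ with $|\phi_s(T)|_{C^5}<\epsilon_1/2$. Because $s\in S$ the solution $\phi_s(t)$ is defined on all of $[0,\infty)$, in particular on $[0,T]$, so Proposition~\ref{P-2} applies with $\phi_0=\phi_s$ and $T_0=T$: it produces $\epsilon_0=\epsilon_0(\phi_s,T,\omega)>0$ such that any $\psi$ with $|\psi-\phi_s|_{C^5}\le \epsilon_0$ has a Calabi flow solution on $[0,T]$ with $|\psi(T)-\phi_s(T)|_{C^5}\le C(\epsilon_0)$, where $C(\epsilon_0)\to 0$ as $\epsilon_0\to 0$; after shrinking $\epsilon_0$ I may assume $C(\epsilon_0)<\epsilon_1/2$, so that $|\psi(T)|_{C^5}<\epsilon_1$. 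Finally, since $s\mapsto \phi_s=s\phi$ is a smooth, hence $C^5$-continuous, path, there is $\delta>0$ with $|\phi_{s'}-\phi_s|_{C^5}\le \epsilon_0$ whenever $|s'-s|<\delta$. For such $s'\in[0,1]$ the previous step gives $|\phi_{s'}(T)|_{C^5}<\epsilon_1$; since $\phi_{s'}(t+T)$ is exactly the Calabi flow with initial potential $\phi_{s'}(T)$, the stability theorem yields $|\phi_{s'}(t)|_{C^5}\to 0$, i.e. $\phi_{s'}\in G$. Hence $(s-\delta,s+\delta)\cap[0,1]\subset S$.

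The only point requiring attention is the order of the choices: the finite time $T$ is fixed first, from the given convergence of $\phi_s(t)$; then $\epsilon_0$, which depends on $T$, is taken small enough that the finite-time drift $C(\epsilon_0)$ is dominated by the stability radius $\epsilon_1$; and only afterwards the width $\delta$ of the parameter interval, from continuity of the path $s\mapsto\phi_s$. Long time existence of the nearby flows is not an issue: it is part of the conclusion of Proposition~\ref{P-2} on $[0,T]$ and of the Chen--He stability theorem for $t\ge T$, and in any case it is guaranteed by the standing Assumption. I do not expect a genuine obstacle in this step; the substantive difficulty of Theorem~\ref{T-1} lies in the closedness of $S$.
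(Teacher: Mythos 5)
Your proposal is correct and follows essentially the same route as the paper: fix a time $T$ at which $\phi_s(t)$ has entered (half of) the stability radius of $\omega_0$, use Proposition~\ref{P-2} on $[0,T]$ to push nearby initial data into the full stability ball at time $T$, and then invoke the Chen--He stability theorem (Theorem 4.1 in \cite{Chen-He}) to conclude convergence; the order of choices ($T$ first, then the finite-time stability radius, then $\delta$) matches the paper's argument exactly.
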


Since $\omega_{\phi_s(t)}$ converges to $\omega_0$ and hence $|\phi_s(t)|_{C^5}\rightarrow 0$ when $t$ sufficiently large. By stability of constant scalar curvature metric along the Calabi flow, we can assume that there exists $\epsilon=\epsilon(\omega_0)$ such that
\[
|\phi|_{C^5}\leq 2 \epsilon,
\]
then $\phi\in G$. Let $T$ be sufficiently large, we assume that $|\phi_s(T)|_{C^5}\leq \epsilon$. Fix $T$, by Proposition \ref{P-2}, there exists $\epsilon_1$ small enough such that $|\psi-\phi_s|_{C^5}\leq \epsilon_1$, then 
\[
|\phi(T)-\psi(T)|_{C^5}\leq c(\epsilon_1, \phi_s, T)
\]
We choose $\epsilon_1$ small enough such that $c(\epsilon_1, \phi_s, T)\leq \epsilon$.  Then
\[
|\psi(T)|\leq 2\epsilon. 
\]
Applying Theorem 4.1 in \cite{Chen-He} to  $\psi(T)$, we get that $\psi(T)\in G$. It then follows that, there exists $\epsilon_1$ such that $\psi\in G$ provided
\[
|\psi-\phi_s|_{C^5}\leq \epsilon_1.
\]
Hence if $s\in S$, we can choose $\delta$ small enough, such that $\phi_r\in G$ for $r\in (s-\delta, s+\delta)\cap [0, 1]$. \\

To show $S$ is closed, it suffices to show that if $[0, 1)\subset S$ then $1\in S$. Actually we want to show if $[0, 1)\subset S$, then the following uniform estimates for $\phi_s(t)$ for any $s\in [0, 1)$, 

\begin{prop}For any $\delta>0$ and $s\in [0, 1)$, there exists $T>0$ such that for any $t\geq T$, 
\[
|\phi_s(t)|_{C^5}\leq \delta. 
\]
\end{prop}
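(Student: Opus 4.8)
The substance of this proposition is that the rate of decay may be taken \emph{independent of} $s\in[0,1)$; for a fixed $s$, since $s\in S$ means $\phi_s\in G$, the conclusion is just the definition of $G$. So the plan is to prove the following uniform statement: for every $\delta>0$ there is a $T=T(\delta)$, not depending on $s$, with $|\phi_s(t)|_{C^5}\le\delta$ for all $t\ge T$ and all $s\in[0,1)$. I would first establish a uniform-in-$s$ decay rate for the Calabi energy $\cC(\phi_s(t))$ using the results of Section 2, and then upgrade this to $C^5$-smallness via the compactness theorem of \cite{Chen-He} and the uniqueness of the constant scalar curvature metric.

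Since $\omega_0$ has constant scalar curvature, the Calabi flow from $0$ is the stationary solution $\phi\equiv0$, so the Corollary of Section 2 (that $K(\phi(t))\to\inf_\cH K$ along any long-time Calabi flow) applied to this solution gives $K(0)=\inf_{\cH}K$; in particular $K\ge K(0)$. Along each flow $\frac{d}{dt}K(\phi_s(t))=\int_M\dot\phi_s(\underline R-R_{\phi_s})\omega_{\phi_s}^n=-\cC(\phi_s(t))$, and $K(\phi_s(t))\to\inf_\cH K=K(0)$ by the same Corollary, so $g_s(t):=K(\phi_s(t))-K(0)=\int_t^{\infty}\cC(\phi_s(\tau))\,d\tau\ge0$. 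Let $D:=\sup_{0\le s\le1}d(0,\phi_s)$, which is finite because $\{s\phi:0\le s\le1\}$ is a smooth path of finite Mabuchi length. By \eqref{EVI} with $\psi=0$, and since $K(\phi_s(\cdot))\ge K(0)$, the function $t\mapsto d(0,\phi_s(t))$ is non-increasing; hence $d(0,\phi_s(t))\le d(0,\phi_s)\le D$ for all $s,t$.

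Now let $A$ be the $C^{1,1}$ geodesic with $A(0)=\phi_s(t)$ and $A(1)=0$. By the weak convexity \eqref{E-chen1} and Cauchy--Schwarz in $\omega_{\phi_s(t)}$,
\[
g_s(t)\le\int_M\dot A(0)\,(R_{\phi_s(t)}-\underline R)\,\omega_{\phi_s(t)}^n\le\|\dot A(0)\|_{L^2}\sqrt{\cC(\phi_s(t))}=d(0,\phi_s(t))\sqrt{\cC(\phi_s(t))}\le D\sqrt{\cC(\phi_s(t))},
\]
using that the $L^2(\omega_{\phi_s(t)})$-speed of the constant-speed geodesic $A$ equals its length $d(\phi_s(t),0)$. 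Since $g_s'=-\cC(\phi_s(t))$, this reads $g_s^2\le D^2(-g_s')$, i.e. $\frac{d}{dt}(1/g_s)\ge1/D^2$ (if $g_s(t_0)=0$ for some $t_0$ then $\phi_s(t)\equiv0$ for $t\ge t_0$ and there is nothing to prove, so we may assume $g_s>0$). Integrating on $[0,t]$ gives $g_s(t)\le D^2/t$. Because the Calabi energy is non-increasing along the Calabi flow (as already used in the proof of Corollary \ref{C-C}), for $t\ge2$
\[
\cC(\phi_s(t))\le\int_{t-1}^{t}\cC(\phi_s(\tau))\,d\tau\le g_s(t-1)\le\frac{D^2}{t-1}\longrightarrow0,
\]
and the right-hand side is independent of $s\in[0,1)$.

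It remains to deduce the $C^5$ bound. Suppose it fails: then there are $\delta_0>0$, $s_j\in[0,1)$ and $t_j\to\infty$ with $|\phi_{s_j}(t_j)|_{C^5}>\delta_0$. The initial potentials $s_j\phi$ all lie in the fixed compact set $\{r\phi:0\le r\le1\}\subset\cH$, so the Ricci bound of the Assumption is uniform in $j$; hence, using the smoothing estimates of the Calabi flow (valid for $t\ge1$) together with the compactness theorem of \cite{Chen-He}, a subsequence of $\omega_{\phi_{s_j}(t_j)}$ converges in $C^\infty$ to a Kähler metric $\omega_\infty=\omega_0+\i\p\bp\phi_\infty\in[\omega_0]$, with $\phi_{s_j}(t_j)\to\phi_\infty$ in $C^\infty$ after normalizing $I=0$. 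By the previous step $\cC(\omega_\infty)=0$, so $\omega_\infty$ has constant scalar curvature; since $(M,J)$ carries no holomorphic vector field, Chen--Tian uniqueness forces $\omega_\infty=\omega_0$, i.e. $\phi_\infty=0$, contradicting $|\phi_{s_j}(t_j)|_{C^5}>\delta_0$. I expect this last step to be the main obstacle: passing from uniform smallness of $\cC$ to uniform $C^5$-closeness to $\omega_0$. This is where the Ricci-curvature Assumption is indispensable — it supplies the uniform a priori bounds feeding the compactness theorem of \cite{Chen-He} — and one must check that all hypotheses of that theorem hold uniformly in $s$ (the reason for choosing the $\phi_s$ in a fixed compact family) before closing the argument with the uniqueness of $\omega_0$.
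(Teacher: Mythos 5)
Your first step --- the uniform-in-$s$ decay $\cC(\phi_s(t))\le D^2/(t-1)$ obtained from the weak convexity \eqref{E-chen1}, Cauchy--Schwarz, and the differential inequality $g_s^2\le -D^2g_s'$ --- is correct and genuinely different from (and quantitatively sharper than) the paper's treatment: the paper instead invokes Corollary \ref{C-C} (all long-time flows share Calabi energy level $0$ at infinity, since the flow from $0$ is stationary) together with the finite-time stability of Proposition \ref{P-2} to transfer smallness of $\cC$ from the $s=1$ flow to $\phi_{s_i}$ at a fixed large time, and then uses monotonicity of $\cC$. Either route supplies the input ``$\cC\to 0$ uniformly'' needed for the limiting argument, and yours avoids Proposition \ref{P-2} at this stage.

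The gap is in your compactness step. You take $t_j$ with $|\phi_{s_j}(t_j)|_{C^5}>\delta_0$ and then invoke the compactness theorem of \cite{Chen-He} plus smoothing to extract a $C^\infty$-limit. But that compactness theorem takes as input a uniform \emph{upper} bound on the potentials together with the Ricci bound, whereas your sequence only carries a \emph{lower} bound on $|\phi_{s_j}(t_j)|_{C^5}$. The Ricci (hence scalar curvature) bound alone gives $|\p_t\phi_{s_j}|\le C$ and therefore only linear-in-time growth of the potential, and the uniform Mabuchi-distance bound $d(0,\phi_s(t))\le D$ does not control $|\phi_{s_j}(t_j)|_{C^0}$ either; so nothing in your argument supplies the a priori bound at times $t_j\to\infty$ that the compactness and smoothing estimates need to start from. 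The paper secures it by normalizing $t_j$ to be the \emph{last} time at which $|\phi_{s_j}(t)|_{C^5}=\delta$ (possible since $s_j\in S$ forces $|\phi_{s_j}(t)|_{C^5}\to0$), which yields $|\phi_{s_j}(t)|_{C^5}\le\delta$ for all $t\ge t_j$; the Ricci bound is then used only to propagate this control backwards to $[t_j-1,t_j]$, so that the smoothing property on that interval upgrades the $C^{4,\alpha}$ convergence to $C^5$ (indeed $C^\infty$) convergence at $t_j$ itself, contradicting $|\phi_{s_j}(t_j)|_{C^5}=\delta$. Your proof closes once you add this normalization of $t_j$; as written, the extraction of the limit $\omega_\infty$ is unjustified.
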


We argue by contradiction. Suppose otherwise, then there exists a sequence of $s_i\rightarrow 1$ and $t_i\rightarrow \infty$ such that
\[
|\phi_{s_i}(t_i)|_{C^5}\geq \delta
\]
for some $\delta>0$.
Since for any $s_i\in [0, 1)$, $|\phi_{s_i}(t)|_{C^5}\rightarrow 0$ when $t\rightarrow \infty$.  Hence we can choose $t_i$ such that 
\[
|\phi_{s_i}(t_i)|_{C^5}=\delta\geq |\phi_{s_i}(t)|_{C^5}, t\geq t_i.
\]
Let $\psi_i(t)=\phi_{s_i}(t_i+t)$ for $t\in [-1, 1]$. Then we know that $\psi_i(t)$ is a sequence of solutions of Calabi flow on $(M, [\omega_0], J)$ in $[0, 1]$ such that 
\[
|\psi_i(t)|_{C^5}\leq \delta. 
\]
By the compactness theorem, we get that $\psi_i(t)$ converges to $\psi_\infty(t)$ such that $\psi_\infty(t)$ is still a solution of the Calabi flow in $[0, 1]$. Note that at the moment, the convergence 
\[
\psi_i(t)\rightarrow \psi_\infty(t)
\]
is only in $C^{4, \alpha}$ for any $\alpha\in (0, 1)$. Now we claim that $\phi_\infty(t)\equiv 0$ for $t\in [0, 1]$. 
When $s=1$, let $\phi_1(t)=\phi(t)$ be the solution of the Calabi flow. By Corollary \ref{C-C}, we know that for any solution of the Calabi flow $\phi(t)$, the Calabi energy has the same energy level at infinity. Hence $\cC(\phi(t))\rightarrow 0$ when $t\rightarrow 0$. 
Hence for any $\epsilon>0$, we can find $t_0$ sufficiently large that 
\[
\cC(\phi(t_0))<\epsilon/2
\]
By Proposition \ref{P-2}, we get that for $i$ sufficiently large,
\[
\cC(\phi_{s_i}(t_0))<\epsilon.
\]
It then follows that for $i$ sufficiently large and $t\in [0, 1]$
\[
\cC(\psi_i(t))=\cC(\phi_{s_i}(t_i+t))<\epsilon.
\]
We can then conclude that for any $t\in [0, 1]$,
\[
\cC(\psi_\infty(t))=0
\]
Hence $\psi_\infty(t)=0$ are all potentials of the constant scalar curvature metric. By the uniqueness theorem, we know that $\psi_\infty(t)\equiv 0$ (we assume that there is no holomorphic vector field). In particular we know that
\[
\psi_i(t) \rightarrow 0
\]
in $C^{4, \alpha}$ for $i$ sufficiently large.  To get a contradiction, we use the assumption that the Ricci curvature is uniformly bounded for all $i$. 
Consider $\psi_i(t)$ in $[-1, 1]$. Then we know that (since the Ricci curvature, hence the scalar curvature is uniformly bounded), 
\[
|\p_t \psi_i(t)|\leq C
\] 
Hence it follows that $\psi_i(t)$ is uniformly bounded in $[-1, 1]$ ($\psi_i(0)$ is uniformly small in $C^5$). By the compactness theorem in \cite{Chen-He}, we know that $\psi_i(t)$ is uniformly bounded in $[-1, 1]$ in $C^{3, \alpha}$. 
Now applying smoothing property of the Calabi flow to $\psi_i(t)$ in $t\in [-1, 1]$ for  $i$ sufficiently large, it follows that for any $t\geq 0$,
\[
|\psi_i(t)|_{C^k}\leq C(k, t)
\]
In other words, $\psi_i(t)$ converges to $0$ in $C^\infty$ for any $t\geq 0$, when $i\rightarrow \infty$. This clearly contradicts the fact that $|\psi_i(0)|_{C^5}=\delta$. This completes the proof Theorem \ref{T-1}.
\end{proof}

\begin{rmk}The Ricci curvature assumption is only used to get an improved regularity at time $t=0$ for $\psi_i(t)$. Such an assumption is rather technical; we hope we can overcome the difficulty to drop the Ricci curvature assumption in future. The assumption on nonexistence of  holomorphic vector fields is rather superfluous and the similar strategy should also apply to extremal metrics. We will leave these problems for future study since Ricci curvature assumption seems to be more serious. 
\end{rmk}

\end{document}